\numberwithin{equation}{section}
\newtheorem{theorem}{Theorem}[section]
\newtheorem{lemma}[theorem]{Lemma}
\newtheorem{question}[theorem]{Question}
\theoremstyle{definition}
\newtheorem{example}[theorem]{Example}
\newtheorem{remark}[theorem]{Remark}
\title[Erratum to  ``Computation of maximal  projection constants'']{Erratum to ``Computation of maximal \\ projection constants"}
\author{Giuliano Basso}
\address{Max Planck Institute for Mathematics, Vivatsgasse 7, 53111 Bonn, Germany}
\email{basso@mpim-bonn.mpg.de}
\begin{document}

\begin{abstract}
This is an erratum to the article: “Computation of maximal projection constants" (J. Funct. Anal., 277).  The statement of Lemma~3.1(2) of that paper is incorrect. As a consequence of this the proof of Theorem~1.4 is incomplete. In this erratum we prove a corrected version of Lemma~3.1 and explain why, with this weaker result, our original strategy for proving Theorem~1.4 no longer works. The other results of the article, in particular the alternative proof of Grünbaum's conjecture, do not depend on Lemma~3.1 and are thus not affected by this error.   
\end{abstract}

\maketitle

\setcounter{section}{1}

The second item of Lemma~3.1 in \cite{zbMATH07104037} is false in general. This error does not affect the main body of the article. However, Lemma~3.1 was the main tool we used in our proofs of Theorems~1.4 and 3.2. As it turns out, the proof strategy used in \cite{zbMATH07104037} to prove Theorem~1.4 cannot possibly lead to the claimed result. We will explain this in detail below. Theorem~1.4 is therefore still an open question, which can be formulated as follows.

\begin{question}\label{qu:new}
Let \(n\) be a positive integer. Is it true that there exists an integer \(d> n\) and an \(n\)-dimensional  linear subspace \(E\subset \ell_\infty^d\) such that \(E\) has maximal projection constant amongst all \(n\)-dimensional Banach spaces? If yes, what is the value of the smallest possible such \(d\)?
\end{question}

We now proceed by stating a corrected version of Lemma~3.1 and explain why the proof strategy used in \cite{zbMATH07104037} is not sufficient to answer Question~\ref{qu:new}. In the following, we use the notation from \cite[Section 3]{zbMATH07104037}. The corrected version of Lemma~3.1 reads as follows.

\begin{lemma}\label{lem:corrected}
The following holds true in every Banach space \(E\).
\vspace{0.25em}
\begin{enumerate}
\item If there exist closed linear subspaces \(V\), \(U\subset E\) such that  \(V\) is finite-dimensional and \(E=V\oplus U\), then 
\[
E^\ast=V^0\oplus U^0,\] 
and \(\dim(U^0)=\dim(V)\), as well as \((V^0)_0=V \textrm{ and } (U^0)_0=U\). \vspace{1em}

\item If there exist weak-star closed linear subspaces \(F\), \(G\subset E^\ast\) such that \(F\) is finite-dimensional and \(E^\ast=F\oplus G\), then 
\[
E=F_0\oplus G_0,
\]
and \(\dim(G_0)=\dim(F)\), as well as
\((F_0)^0=F \textrm{ and } (G_0)^0=G\).  \vspace{1em}
\item  If there exist closed linear subspaces \(V\), \(U\subset E\) such that \(V\) is finite-dimensional and \(E=V\oplus U\),  then
\begin{equation*}
\lVert P_V^U \rVert=\lVert P_{U^0}^{V^0} \rVert.
\end{equation*}
\end{enumerate}
\end{lemma}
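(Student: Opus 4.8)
The plan is to pass to Banach-space adjoints and use that a bounded operator and its adjoint have equal norm. Set \(P := P_V^U \colon E \to E\), the projection with range \(V\) and kernel \(U\); since \(V\) is finite-dimensional (and \(U\) closed) this \(P\) is bounded, so the adjoint \(P^\ast \colon E^\ast \to E^\ast\), \((P^\ast f)(x) = f(Px)\), is a well-defined bounded operator, and it is idempotent because \(P\) is. By the Hahn--Banach theorem \(\lVert P\rVert = \lVert P^\ast\rVert\), so it suffices to show that \(P^\ast\) is exactly the projection \(P_{U^0}^{V^0}\).

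For that I would compute the range and kernel of \(P^\ast\). Every functional of the form \(P^\ast f\) vanishes on \(\ker P = U\), so \(\operatorname{range}(P^\ast) \subseteq U^0\); conversely, if \(g \in U^0\) and \(x = v + u\) with \(v \in V\), \(u \in U\), then \((P^\ast g)(x) = g(v) = g(v+u) = g(x)\), whence \(P^\ast g = g\). Thus \(\operatorname{range}(P^\ast) = U^0\) and \(P^\ast\) acts as the identity there. On the other hand \(P^\ast f = 0\) holds exactly when \(f\) annihilates \(\operatorname{range}(P) = V\), i.e. when \(f \in V^0\), so \(\ker(P^\ast) = V^0\). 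Since item~(1) gives \(E^\ast = V^0 \oplus U^0\), these two facts say precisely that \(P^\ast\) is the projection onto \(U^0\) along \(V^0\); that is, \(P^\ast = P_{U^0}^{V^0}\), and therefore \(\lVert P_V^U\rVert = \lVert P\rVert = \lVert P^\ast\rVert = \lVert P_{U^0}^{V^0}\rVert\).

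I do not anticipate a real obstacle: this is a routine duality computation once item~(1) is available. The only points that need a little care are the annihilator bookkeeping --- in particular verifying that \(\operatorname{range}(P^\ast)\) equals \(U^0\) and is not merely contained in it --- and the appeal to item~(1), which is what guarantees that \(V^0 \oplus U^0\) is genuinely a direct-sum decomposition of \(E^\ast\) and hence that \(P_{U^0}^{V^0}\) is a well-defined bounded projection. No weak-star considerations are needed, since the whole statement concerns operator norms and the ordinary adjoint.
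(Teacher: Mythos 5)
Your argument covers only item~(3) of the lemma. For that item it is correct and in fact a bit slicker than the paper's own proof: you identify the adjoint \(P^\ast\) of \(P := P_V^U\) as an idempotent with range \(U^0\) and kernel \(V^0\), conclude \(P^\ast = P_{U^0}^{V^0}\), and invoke \(\lVert P\rVert = \lVert P^\ast\rVert\); the paper instead writes both norms as the double supremum \(\sup_{x\in S(E)}\sup_{\ell\in S(E^\ast)} \ell_G(x_V)\) and observes the two orders of the suprema agree. Both routes are legitimate, and yours makes the structural reason for the equality (``the dual projection \emph{is} the adjoint'') more transparent.

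The genuine gap is that items~(1) and~(2) are not proved at all: you cite item~(1) as an input (``since item~(1) gives \(E^\ast = V^0\oplus U^0\)\dots''), and you never mention item~(2). Item~(1) is routine (the paper derives \(E^\ast = V^0\oplus U^0\) by composing functionals with the two projections, and gets \((V^0)_0=V\), \((U^0)_0=U\) from the bipolar theorem for closed subspaces), but item~(2) is the substantive part of the lemma and the entire reason this erratum exists: one must show that a weak-star closed finite-codimensional complement \(G\) of a finite-dimensional \(F\subset E^\ast\) induces a decomposition \(E = F_0\oplus G_0\) \emph{downstairs}, and this genuinely uses weak-star closedness --- via \((F_0)^0=F\) and \((G_0)^0=G\) --- together with an argument that \(F_0+G_0\) is closed (the paper pulls it back through the finite-dimensional quotient \(E/F_0\)) and then separates a hypothetical nonzero functional vanishing on \(F_0+G_0\). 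Your closing remark that ``no weak-star considerations are needed'' is true of item~(3) in isolation but false for the lemma as stated; without the weak-star hypothesis item~(2) fails, as Example~\ref{ex:kobos} shows. As written, the proposal is a correct proof of one third of the statement conditional on another third.
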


\begin{proof}
\phantom{We prove each item separately:}
\begin{enumerate}
\item Since \(V\) and \(U\) are closed, it follows that \((V^0)_0=V \textrm{ and } (U^0)_0=U\), see e.g. \cite[Theorem~4.7]{zbMATH01022519}. Moreover, \(V\) and \(E/U\) are isomorphic as vector spaces, and thus using that \(V\) is finite-dimensional and \((E/U)^\ast=U^0\), we find that \(\dim(U^0)=\dim(V)\). To finish the proof we show in the following that \(E^\ast=V^0\oplus U^0\).
We abbreviate \(F=V^0\) and \(G=U^0\). Let \(\ell\in E^\ast\). Then \(\ell_F:=\ell\circ P_U^V\) and \(\ell_G:=\ell\circ P_V^U\) belong to \(F\) and \(G\), respectively. Moreover, if \(x=v+u\) with \(v\in V\) and \(u\in U\), then
\[
\ell_F(x)+\ell_G(x)=\ell(u)+\ell(v)=\ell(u+v)=\ell(x),
\]
and therefore \(F+G=E^\ast\). If \(\ell\in F\cap G\), then \(\ell(v)=0\) and \(\ell(u)=0\) for all \(v\in V\) and \(u\in U\). Since \(E=V\oplus U\), this implies that \(\ell=0\). Hence, \(E^\ast=F\oplus G\), as desired.\vspace{1em}

\item It is readily verified that \(F_0\cap G_0=\{0\}\). In the following we show that \(F_0\oplus G_0=E\). Since \(F\) and \(G\) are weak-star closed, it follows directly from the Hahn-Banach theorem that 
\[
(F_0)^0=F \text{ and } (G_0)^0=G,
\]
see e.g. \cite[Theorem~4.7]{zbMATH01022519}. Using that \((E/F_0)^\ast=(F_0)^0=F\) we find that \(E/F_0\) is finite-dimensional. Let \(\pi\colon E \to E/ F_0\) denote the quotient map. Since \(G_0+F_0=\pi^{-1}(\pi(G_0+F_0))\), it follows that \(G_0+F_0\) is a closed linear subspace of \(E\). 

Now, suppose \(\ell\in E^\ast\) vanishes on \(G_0+F_0\). Let \(\ell=\ell_F+\ell_G\) denote the decomposition induced by \(E^\ast=F\oplus G\). For every \(x\in F_0\) we have \(\ell(x)=0\) and \(\ell_F(x)=0\), and consequently, \(\ell_G(x)=0\). This implies that \(\ell_G\in (F_0)^0=F\). But, by definition \(\ell_G\in G\) and therefore \(\ell_G=0\). Analogously, we can deduce that \(\ell_F=0\) and thereby \(\ell=0\). Hence, as \(F_0+G_0\) is a closed linear subspace of \(E\), it follows that \(F_0+G_0=E\), as desired. \vspace{1em}

\item  Given \(x\in E\) let \(x=x_V+x_U\) be the unique decomposition induced by \(V \oplus U\). We abbreviate \(F=V^0\) and \(G=U^0\) and for  \(\ell\in E^\ast\) we write \(\ell=\ell_F+\ell_G\) for the decomposition induced by \(E^\ast=F\oplus G\).  Moreover, we write \(S(E)\) for the unit sphere of a Banach space. We have
\[
\lVert P_V^U\rVert=\sup_{x\in S(E)} \lVert x_V \rVert= \sup_{x\in S(E)} \sup_{\ell\in S(E^\ast)} \ell_G(x_V)
\]
and also
\[
\lVert P_G^F\rVert=\sup_{\ell\in S(E^\ast)} \lVert \ell_G \rVert= \sup_{\ell\in S(E^\ast)} \sup_{x\in S(E)} \ell_G(x_V).
\]
Hence the desired equality follows. 
\end{enumerate}
\end{proof}


Originally, the second item of the lemma only required that \(G\) is closed and not necessarily weak-star closed. However, with this weaker assumption the item becomes a false statement. The following counterexample is due to T. Kobos, to whom I am grateful for making me aware of this issue.

\begin{example}\label{ex:kobos}
Let \(F\subset \ell_\infty\) be the two-dimensional linear subspace spanned by the vectors
\[
f_1=(1, 0, 1, 1, \ldots) \quad \text{ and } \quad f_2=(0,1,1,1, \ldots)
\]
and let \(G\subset \ell_\infty\) by the weak-star closed subset \(G=\ker(\pi_1)\cap\ker(\pi_2) \), where \(\pi_i\colon \ell_\infty \to \mathbb{R}\) denotes the projection onto the \(i\)th coordinate. Clearly, \(\ell_\infty=F\oplus G\). We know from the second item of Lemma~\ref{lem:corrected} that \(\ell_1=F_0\oplus G_0\). We abbreviate \(V:=(F_0)_0\) and \(U:=(G_0)_0\). In the following we show that \(V\oplus U\neq c_0\) and thus the statement of Lemma~3.1(2) in \cite{zbMATH07104037} is incorrect. 

Notice that for \(i\geq 3\) any of the vectors \(a(i)=(1,1,0, \ldots, 0, -1, 0, 0, \ldots)\), where the \(i\)-th coordinate is equal to \(-1\), is contained in \(F_0\). Hence, if \(x=(x_1, x_2, x_3, \ldots)\) is contained in \(V\), then necessarily \(x_1+x_2=x_i\) for all \(i\geq 3\). But \(x\in c_0\) and thus \(V\) is equal the one-dimensional linear subspace of \(c_0\) spanned by the vector \((1, -1, 0, 0, \ldots)\). Clearly, \(G_0\subset \ell_1\) is equal to the two-dimensional subspace of \(\ell_1\) spanned by the vectors \((1, 0, 0, 0, \ldots)\) and \((0, 1, 0, 0, \ldots)\). This implies that \(U=\ker(\pi_1|_{c_0})\cap \ker(\pi_2|_{c_0})\). Hence, for example, the vector \((1, 0,0, \ldots)\) is not contained in \(V\oplus U\) and thus \(V\oplus U\neq c_0\).
\end{example}

In \cite{zbMATH07104037}, Lemma~3.1 was used in an essential way to prove Theorem~3.2. This theorem claimed that if \(X\) is a Banach space and \(F\subset X^{\ast\ast}\) a finite-dimensional subspace, then the pre-pre-annihilator \(V=(F_0)_0\) has the property that 
\begin{equation}\label{eq:claimed}
\lambda(V, X)=\lambda(F, X^{\ast\ast}),
\end{equation}
where \(\lambda\) denotes the relative projection constant. However, using the example from above, it is not difficult to show that this claimed equality cannot be true in general. Let \(X=c_0\) and consider the linear subspace \(F\subset X^{\ast\ast}=\ell_\infty\) from Example~\ref{ex:kobos}. We know that \(V:=(F_0)_0\) is \(1\)-dimensional and thus \(\lambda(V, X)=1\). But \(F\) is isometric to \(\mathbb{R}^2\) equipped with the norm 
\[
\lVert (x, y) \rVert=\max\{\vert x\vert, \vert y \vert, \vert x+y \vert\}
\]
and thus in particular it is not isometric to \(\ell_\infty^2\). Hence, \(\lambda(F, X^{\ast\ast})>1\) and the claimed equality \eqref{eq:claimed} does not hold. 

The error is caused precisely due to the fact that for a given \(F\subset X^{\ast\ast}\), the pre-annihilator \(F_0\subset X^\ast\) is in general not necessarily weak-star closed. A modified version of Theorem~3.2 reads as follows.

\begin{theorem}\label{thm:modified}
Let \(X\) be a Banach space and \(F\subset X^{\ast\ast}\) a finite-dimensional subspace such that \(F_0\subset X^\ast\) is weak-star closed. Then \(V:=(F_0)_0\subset X\) satisfies \(\dim(V)=\dim(F)\) and \(\lambda(V, X)=\lambda(F, X^{\ast\ast})\).
\end{theorem}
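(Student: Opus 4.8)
The plan is to reduce the whole statement to a single fact: under the hypothesis, the canonical image \(\iota(V)\subset X^{\ast\ast}\) of \(V\) equals \(F\); once that is known, both assertions follow by short formal arguments. To get there I would first note that, since \(F\) is finite-dimensional, the restriction map \(X^\ast\to F^\ast\) is surjective with kernel \(F_0\), so \(F_0\) has codimension \(\dim(F)\) in \(X^\ast\). Pick any algebraic complement \(H\) of \(F_0\) in \(X^\ast\): it is finite-dimensional, hence weak-star closed, and \(X^\ast=H\oplus F_0\) with both summands weak-star closed. Lemma~\ref{lem:corrected}(2), applied with \(E=X\) and with \(H,F_0\) playing the roles of ``\(F\)'', ``\(G\)'', then yields \(X=H_0\oplus (F_0)_0=H_0\oplus V\) and \(\dim(V)=\dim(H)=\dim(F)\). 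Moreover \(F\), being finite-dimensional, is weak-star closed in \(X^{\ast\ast}\), so \((F_0)^0=F\); and every \(x\in V=(F_0)_0\) has \(\iota(x)\) annihilating \(F_0\), i.e. \(\iota(x)\in(F_0)^0=F\). Thus \(\iota(V)\subseteq F\), and since \(\iota\) is an isometric embedding with \(\dim(V)=\dim(F)\), the map \(\iota|_V\colon V\to F\) is an isometric isomorphism; in particular \(\iota(V)=F\).

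Granting \(\iota(V)=F\), the inequality \(\lambda(F,X^{\ast\ast})\le\lambda(V,X)\) is the familiar ``pass to the bidual'' argument: for a projection \(P\colon X\to X\) with range \(V\), the bidual \(P^{\ast\ast}\) is again a projection with \(\lVert P^{\ast\ast}\rVert=\lVert P\rVert\); its range is \((\ker P^\ast)^0=(V^0)^0\), and since the finite-dimensional subspace \(\iota(V)\) is weak-star closed we have \((V^0)^0=\iota(V)=F\); finally \(P^{\ast\ast}(\iota v)=\iota(Pv)=\iota v\) for \(v\in V\), so \(P^{\ast\ast}\) is a projection of \(X^{\ast\ast}\) onto \(F\), and an infimum over \(P\) gives the bound. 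Conversely, for a projection \(Q\colon X^{\ast\ast}\to X^{\ast\ast}\) with range \(F\), define \(P\colon X\to X\) by \(P(x):=\iota^{-1}\bigl(Q(\iota x)\bigr)\), which makes sense because \(Q(\iota x)\in F=\iota(V)\subseteq\iota(X)\). Then \(P\) is linear, \(\iota\circ P=Q\circ\iota\) forces \(\lVert P\rVert\le\lVert Q\rVert\), and \(P|_V=\mathrm{id}_V\) because \(Q|_F=\mathrm{id}_F\); an infimum over \(Q\) yields \(\lambda(V,X)\le\lambda(F,X^{\ast\ast})\), completing the proof.

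I expect the only real content — and the only place where the hypothesis is used — to be the identity \(\iota(V)=F\), equivalently \(\dim(V)=\dim(F)\), obtained above from Lemma~\ref{lem:corrected}(2); this is exactly what breaks in Example~\ref{ex:kobos}, where \(F_0\) fails to be weak-star closed and \(\dim(V)=1<2=\dim(F)\). The two projection-constant estimates are purely formal and hold for any finite-dimensional \(V\subseteq X\) with \(\iota(V)=F\); in fact, even without the hypothesis the second estimate still gives \(\lambda(V,X)\le\lambda(F,X^{\ast\ast})\), but then \(V\) and \(F\) are spaces of different dimension and the constants can genuinely differ, matching the counterexample.
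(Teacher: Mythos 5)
Your proof is correct and takes essentially the same route as the paper's: apply Lemma~\ref{lem:corrected}(2) to a finite-dimensional (hence weak-star closed) complement of \(F_0\) to obtain \(\dim(V)=\dim(F)\) and \(\iota(V)=F\), then derive the two inequalities exactly as the paper does (the paper gets \(\lambda(F,X^{\ast\ast})\le\lambda(V,X)\) from items (1) and (3) of Lemma~\ref{lem:corrected} rather than from \(P^{\ast\ast}\), but that is the same double-adjoint computation). The one caveat is your closing aside that \(\lambda(V,X)\le\lambda(F,X^{\ast\ast})\) would persist without the weak-star hypothesis: when \(\iota(V)\subsetneq F\), a projection of \(X^{\ast\ast}\) onto \(F\) need not carry \(\iota(X)\) into \(\iota(V)\), so your construction of \(P\) breaks down and that inequality is not justified by your argument.
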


\begin{proof}
Since \(F_0\subset X^\ast\) has codimension \(n\) and is weak-star closed and thus in particular closed in the norm topology, there exists an \(n\)-dimensional linear subspace \(A\subset X^\ast\) such that \(X^\ast=F_0\oplus A\). Hence, using the second item of Lemma~\ref{lem:corrected}, we find that \(X=V\oplus A_0\) and \(\dim(V)=\dim(A)=n\), as well as \(V^0=F_0\). Since \((F_0)^0=F\), it follows that \((V^0)^0=F\). Suppose now \(U\subset X\) is a closed linear subspace such that \(X=V\oplus U\). Then, using the first and third item of Lemma~\ref{lem:corrected}, we obtain \(\lVert P_V^U \rVert=\lVert P_{U^{0}}^{V^0}\rVert=\lVert P_F^{G}\rVert\) for \(G:=(U^0)^0\). This implies \(\lambda(F, X^{\ast \ast})\leq \lambda(V, X)\). 

To finish the proof, we show the other inequality. Let \(J\colon X \to X^{\ast\ast}\) denote the canonical embedding of \(X\) into \(X^{\ast\ast}\). Then for every \(x\in V\), we have \(J(x)(\ell)=\ell(x)=0\) for all \(\ell\in V^0=F_0\). This implies that \(J(V)\subset (F_0)^0=F\). Since \(\dim(V)=\dim(F)\), we find that \(J(V)=F\). Hence, for every \(P\colon X^{\ast\ast}\to F\) the map \(Q:=J^{-1}\circ P \circ J\) is a projection of \(X\) onto \(V\) and \(\lVert Q \rVert \leq \lVert P \rVert\). This implies that \(\lambda(V, X) \leq \lambda(F, X^{\ast \ast})\), completing the proof.
\end{proof}

Thus, if \(F\subset X^{\ast \ast}\) is as in Theorem~\ref{thm:modified}, then \(F\) is contained in the image of \(X\) under the canonical embedding \(X \to X^{\ast\ast}\). In particular, in the special case when \(X=c_0\), Theorem~\ref{thm:modified} can only be applied to spaces that are already contained in \(c_0\). Therefore, the strategy used in \cite{zbMATH07104037} to prove Theorem~1.4 cannot possibly work, since we would have to assume the very thing that we want to prove. 


In the following remark, we explain how the error in our proof of Theorem~1.4 affects the results of the follow-up article \cite{basso--2021}.

\begin{remark}
    
To the author's knowledge, Theorem~1.4 has so far only been applied in the article \cite{basso--2021}. In particular, the moreover part of \cite[Theorem~1.4]{basso--2021} is equivalent to an affirmative answer of Question~\ref{qu:new}, so it should be considered an open question. We also used Theorem~1.4 (in a non-essential way) as a shortcut in the proof of \cite[Proposition~5.1]{basso--2021}. In the following, we give a short sketch of how the use of Theorem~1.4 can be avoided in the proof of \cite[Proposition~5.1]{basso--2021}. Thus, with the exception of the moreover part of \cite[Theorem~1.4]{basso--2021}, the results of \cite{basso--2021} hold true regardless of whether polyhedral maximizers exist or not, and are therefore not affected by our incorrect proof of Theorem 1.4.

For the rest of this remark we use the notation of \cite{basso--2021}. Let \(P_0\in \mathcal{P}_{n, m}\) be such that \(\lvert P_0 \rvert\) is a positive matrix and 
\[
\rho(\lvert P_0 \rvert)=\max_{P\in \mathcal{P}_{n, m}} \rho(\lvert P \rvert)=\Pi(n, m).
\]
The existence of such a matrix can  readily be verified by combining \cite[Theorem~1.4]{basso--2021} and \cite[Lemma~3.2]{basso--2021}. Let \(v\in \mathbb{R}^m_{>0}\), \(d\in \mathbb{N}\), and \(q_i=p_i/d\) for \(i=1, \ldots, m\), and \(\Lambda=\text{diag}(q_1, \ldots, q_m)\) be defined as in the proof of \cite[Proposition~5.1]{basso--2021}. Further, let \(S_0\in \mathcal{S}_m\) be such that \(\pi_n(\sqrt{\Lambda} S_0 \sqrt{\Lambda})\) is maximal amongst \(S\in \mathcal{S}_m\) and let \(S\in\mathcal{S}_d\) be the \((p_1, \ldots, p_m)\)-blow up of \(S_0\). Clearly, there exists a \(d\times n\) matrix \(U\) with columns \(u_i\) such that \(U^t U=\mathbbm{1}_{n\times n}\) and \(S u_i=\lambda_i u_i\). We set \(P=UU^t\). 

By construction, \(\pi_n(S)=\text{Tr}(SP)\), and \(S\) and \(P\) commute. We show in the following that moreover \(S=\text{Sgn}(P)\). Let \(r_i\) denote the \(i\)-th row of \(U\). Since \(S\) is a \((p_1, \ldots, p_m)\)-blow up of \(S_0\), there exists a partition \(A_1, \ldots, A_m\) of \(\{1, \ldots, d\}\) such that \(\# A_i=p_i\) and 
\(r_s=r_t=:w_i\) for all \(s\), \(t\in A_i\). For \(i=1, \ldots, m\) we define \(z_i=\sqrt{p_i} w_i\) and let \(V\) denote the \(m\times n\) matrix with rows \(z_i\). Notice that \(V^tV=\mathbbm{1}_{n \times n}\). Moreover, using \cite[Lemma~2.2]{zbMATH07104037}, we find that \(Q:=VV^t\) satisfies \(\text{Tr}(\sqrt{\Lambda} S_0 \sqrt{\Lambda} Q)=\pi_n(\sqrt{\Lambda} S_0 \sqrt{\Lambda})\) and so by \cite[Lemma~3.2]{basso--2021}, we have \(\text{Sgn}(Q)=S_0\). But \(\text{Sgn}(P)\) is clearly the \((p_1, \ldots, p_m)\)-blow up of \(\text{Sgn}(Q)\). Hence, \(\text{Sgn}(P)=S\), as claimed. In particular, \(\lvert P \rvert\) is a positive matrix. 

To finish the proof, we need to show that \(d \cdot \rho(\lvert P \rvert)\leq j^t \lvert P \rvert j+\epsilon\).
Since \(\rho(\lvert Q\rvert)=\rho(\lvert P \rvert)\) (which follows from a short computation) and 
\[
\pi_n(S)=\text{Tr}(SP)=\sum_{i, j}^d s_{ij} p_{ij} = \sum_{i, j}^d \lvert p_{ij} \rvert=j^t \lvert P \rvert j,
\]
we obtain that
\[
d\cdot \rho(\lvert P \rvert)-j^t \lvert P \rvert j = d\cdot \rho(\lvert Q \rvert)-\pi_n(S).
\]
Thus, using that \(\rho(\lvert Q \rvert)\leq \rho(\lvert P_0 \rvert)\) as well as \(\pi_n(S)=d\cdot \pi_n(\text{Sgn}(P_0) \Lambda)\) and Equation (13) of \cite{basso--2021}, we may conclude that
\[
d\cdot \rho(\lvert P \rvert)-j^t \lvert P \rvert j \leq d\cdot \big( \rho(\lvert P_0 \rvert)-\sqrt{q}^t \lvert P_0 \rvert \sqrt{q}\big).
\]
Now, exactly the same argument as in the proof of \cite[Proposition~5.1]{basso--2021} shows that \(d\cdot \rho(\lvert P \rvert) \leq j^t \lvert P \rvert j+\epsilon\), as desired. 

\end{remark}

We take the opportunity to record some additional comments that correct some minor oversights in \cite{zbMATH07104037}.

\begin{enumerate}
\vspace{1em}
    \item In the proof of Theorem~1.2 we  rely crucially on the equality case in von Neumann's trace inequality, but we have omitted to give a reference for this fact. A good reference is Theobald's article \cite{theobald}, which deals with real symmetric matrices and proves exactly the statement we need. See also the survey article \cite{carlsson} which discusses the history of the 'equality case' in von Neumann's trace inequality in detail. \vspace{1em}
    \item In the proof of Lemma~4.2 it seems worth to point out that \(\Lambda_Q\) is an diagonal matrix with positive entries. Hence, \(D:=\sqrt{\Lambda_Q}\sqrt{\Lambda_{Q^t}}\) is a diagonal matrix and \(\frac{1}{\text{Tr}(D)} D\) is contained in \(\mathcal{D}_d\) and therefore the inequality 
    \[
    1 \leq \text{Tr}(\sqrt{\Lambda_Q}\sqrt{\Lambda_{Q^t}})
    \]
    at the bottom of p. 3575 in \cite{zbMATH07104037} is a direct consequence of the equalities \(\pi_n(A\sqrt{\Lambda_Q}\sqrt{\Lambda_{Q^t}})=\pi_n(A \Lambda)\) and \(\pi_n(A\Lambda)=\max_{D\in\mathcal{D}_d} \pi_n(AD)\). \vspace{1em}

    \item  For each \(\epsilon \in [0,1]\) we define
    \[
    \mathcal{D}_d^{(\epsilon)}:=\{ D\in \mathcal{D}_d : \min d_{ii} \geq \epsilon \}.
    \]
Although Lemma~4.2 is only stated if \(\Lambda\in \mathcal{D}_d^{(0)}\) is non-singular, using the previous remark, it is not difficult to see that the conclusions of the lemma also hold if \(\Lambda\in \mathcal{D}_d^{(\epsilon)}\) and
\[
\pi_n(A \Lambda)=\max_{D\in\mathcal{D}_d^{(\epsilon)}} \pi_n(AD).
\]
\item In Section~4.3 it is claimed that  Lemmas 4.1 and 4.2 directly imply that 
    \[
    \max_{D\in \mathcal{D}_{N}} \pi_2( R_N D)=\pi_2(\tfrac{1}{N} R_N).
    \]
    Formally, however, Lemma~4.2 is only applicable if the maximizer \(D\in \mathcal{D}_N\) is invertible. To handle the case where the maximizer is a singular matrix, it suffices to note that
    \[
    \max_{D\in \mathcal{D}_{N}} \pi_2( R_N D)=\lim_{\epsilon \downarrow 0} \max_{D\in \mathcal{D}_{N}^{(\epsilon)}} \pi_2( R_N D)
    \]
    and to use the modified version of Lemma~4.2 from the previous remark. 
    \vspace{1em}

    \item In the proof of Lemma~4.4 a fixed point result from metric geometry was used to construct a matrix \(\Lambda\) with certain properties. However, the use of this fixed point theorem can easily be avoided. Indeed, for any \(D\in X\), the matrix
    \[
    \Lambda=\frac{1}{\lvert \text{Stab}(A) \rvert} \sum_{Q\in \text{Stab}(A) } QDQ^t
    \]
    clearly has the desired properties. 
\end{enumerate}
\bibliographystyle{alpha}
\bibliography{sample}

\end{document}